\def\NAT@def@citea{\def\@citea{\NAT@separator}}
\theoremstyle{plain}
\newtheorem{theorem}{Theorem}[section]
\newtheorem{lemma}[theorem]{Lemma}
\newtheorem{corollary}[theorem]{Corollary}
\theoremstyle{definition}
\newtheorem{example}[theorem]{Example}
\theoremstyle{remark}
\begin{document}
\title{Location of the Zeros of Certain Complex-Valued Harmonic Polynomials}
\author{
\bigskip \name{Hunduma~L.~Geleta\textsuperscript{1} and Oluma.A~Alemu\textsuperscript{2}}
 \affil{\textsuperscript{1} Department of Mathematics, Addis Ababa University, Addis Ababa, Ethiopia. \\  Email: hunduma.legesse@aau.edu.et \\ \bigskip \textsuperscript{2} Department of Mathematics, Addis Ababa University, Addis Ababa, Ethiopia. \\  Email:oluma.ararso@aau.edu.et}}
\maketitle
\begin{abstract}
Finding an approximate region containing all the zeros of analytic polynomials is a well-studied problem. But the numb er of the zeros and regions containing all the zeros of complex-valued harmonic polynomials is relatively a fresh research area. It is well known that all the zeros of analytic trinomials are enclosed in some annular sectors that take into account the magnitude of the coefficients. Following Kennedy and Dehmer, we provide the zero inclusion regions of all the zeros of complex-valued harmonic polynomials in general, and in particular, we bound all the zeros of some families of harmonic trinomials in a certain annular region.
\end{abstract}
\begin{keywords}
Analytic polynomials, harmonic polynomials, zero inclusion regions,trinomials.
\end{keywords} 
\section{Introduction}
The location of the zeros of analytic polynomials has been studied by many researchers and we refer the reader to Brilleslyper and Schaubroeck \cite{brilleslyper2014locating}, Dhemer \cite{dehmer2006location}, Frank \cite{frank1946location},  Gilewicz and Leopold \cite{gilewicz1985location}, Howell and Kyle \cite{howell2018locating}, Johnson and Tucker \cite{johnson2009enclosing}, Kennedy \cite{kennedy1940bounds},  and Melman \cite{melman2012geometry}. Recently, researching the number of the zeros of general analytic trinomials  and  the regions in which  the zeros are located has become of interest due to their application in other fields. For example, the roots of the trinomial equations can be interpreted as the equilibrium points of unit masses that are located at the vertices  of two regular concentric polygons  centered at   the origin in the complex plane (Szabo \cite{szabo2010roots}). A useful result in determining the location of the zeros of analytic polynomials is the argument principle. Recall that if $f$ is analytic inside and on positively oriented rectifiable Jordan curve C and  $f(z) \neq 0$  on C, then  the winding number of the image curve $f(C)$ about the origin $,\frac{1}{2\pi}\Delta _C~ arg f(z),$  equals the total number of zeros of $f$ in D, counted according to multiplicity where D is a plane domain bounded by C. In 1940 Kennedy  \cite{kennedy1940bounds} showed that  the roots of analytic trinoimial equations of the form $z^n + az^k + b = 0 ,$ where $ab \neq 0,$  have  certain bounds to their respective absolute values. In 2006, Dehmer \cite{dehmer2006location} proved that all the zeros of complex-valued analytic polynomials lie in certain closed disk. In 2012,  Melman \cite{melman2012geometry} investigated the regions in which the zeros of analytic trinomials of the form $p(z) = z^n - \alpha z^k - 1,$ with integers $n \geq 3$ and $1 \leq k \leq n - 1$ with $\mathrm{gcd}(k, n) = 1,$ and $\alpha \in \mathbb{C},$ lie. He determined the zero inclusion regions for the following cases: a$)$ for any value of $|\alpha|;$ $b)$  $| \alpha| > \sigma (n, k);$ and c$)$  $| \alpha| < \sigma (n, k),$  where  $\sigma(n, k)=\frac{n}{n-k} (\frac{n-k}{k})^{\frac{k}{n}}.$ In each cases, Melman provided useful information  on \bigskip the location  of the zeros of $p.$ \\
One area of investigation that has recently become of interest is the number and location of zeros of complex-valued harmonic polynomials. In 1984 Clunie and Sheil-Small \cite{clunie1984harmonic} introduced the family of complex-valued harmonic functions $f = u + iv$ defined in the unit disk $\mathbb{D} = \{z : |z| < 1\},$ where $u$ and $v$ are real harmonic in $\mathbb{D}.$ A continuous function $f= u+iv$ defined in a domain $G \subset \mathbb{C}$ is harmonic in $G$ if $u$ and $v$ are real harmonic in $G.$  In any simply connected subdomain of $G,$ we can decompose $f$ as $ f = h + \overline{g},$  where $g$ and $h$ are analytic and $\overline{g}$ denotes the function $z \mapsto \overline{g(z)}($see Duren \cite{duren2001univalent}). This family of complex-valued harmonic functions is a generalization of analytic mappings studied in geometric function theory, and much research has been  done investigating the properties of these harmonic functions. For an overview of the topic, see Duren \cite{duren2004harmonic} and Dorff and Rolf \cite{dorff2012anamorphosis}. It was shown by Bshouty \textit{et al.} \cite{bshouty1995exact} that there exists  a complex-valued harmonic polynomial $ f = h + \overline{g},$ such that $h$  is an analytic polynomial of degree $n,$ $g$  is an analytic polynomial of degree $m < n$ and $f$ has exactly $n^2$ zeros counting with  multiplicities in the field of complex numbers,  $\mathbb{C}.$ We are motivated by the work of Brilleslyper et al. \cite{brilleslyper2020zeros},  Kennedy \cite{kennedy1940bounds}, and Dehmer \cite{dehmer2006location} on the number and location of zeros of  trinomials. Recently, Brilleslyper et al. \cite{brilleslyper2020zeros} studied the number of zeros of harmonic trinomials of the form $p_c(z) = z^n + c\overline{z}^k - 1$  where $1 \leq k \leq n - 1, n \geq 3, c \in \mathbb{R^+},$ and $\mathrm{gcd}(n, k) = 1.$ They showed that the number of zeros of  $p_c(z)$ changes as $c$ varies and proved that the distinct number of zeros of $p_c(z)$ ranges from $n$ to $n+2k$. Among other things, they used the argument principle for harmonic function that can be formulated as a direct generalization of the classical result for analytic functions (Duren et al. \cite{duren1996argument}). An interesting open problem raised in \cite{brilleslyper2020zeros} was finding where the zeros of this trinomials are  located and deciding whether the \bigskip value of $c$ affects the zero inclusion regions of  $p_c(z).$\\
In this paper, we first look at general harmonic polynomials and locate the zeros by using some results obtained by other mathematicians. Then using these results, we bound all the zeros of $p_c(z)$ by limiting our consideration to different cases and we conclude that the location of the  zeros of  $p_c(z)$ depends on the value of $c.$  The main theorems in this paper   are  Theorem $\ref{a'}$ and Theorem $\ref{a}.$  The structure of our paper is  as follows. In section $\ref{p}$, we present some important preliminary results that will be used to prove our two main theorems. In section $\ref{q''}$, we state and prove the main results of this paper. Theorem $\ref{a'}$ determines an upper bound for the moduli of  all the zeros of complex-valued harmonic polynomials and  provides the zero inclusion regions for harmonic trinomial $p_c(z) = z^n + c\overline{z}^k - 1$ where where $1 \leq k \leq n - 1, n \geq 3, c \in \mathbb{R^+},$ and $\mathrm{gcd}(n, k) = 1.$ By considering different cases,  Theorem $\ref{a}$  describes other  zero inclusion regions, which is  sharper, for the same trinomial $p_c(z) = z^n + c\overline{z}^k - 1$ where $0<c<1, c\geq1$ and $c \in \mathbb{C}.$ 

   \section{Preliminaries}$\label{p}$
   In this section we review some important concepts and results that we will use later on to prove our theorems. We begin by stating the well known results such as Cauchy's bound, Descarte's Rule of Sign and some useful theorems and lemmas.
   \begin{theorem}[\textbf{Cauchy's bound}]$\label{p'}$
   Suppose  $p(x) = a_nx^n + a_{n-1}x^{n-1} + \cdots + a_1x + a_0$ be a polynomial of degree $n$ with $n\geq 1.$  Let $M:= \mathrm{max}\left\{ \left|\frac{a_j}{a_n}\right| \right\}_{j=0}^{n-1}.$ Then all zeros of $p$ lie in the interval $[-(M + 1), M + 1].$
   \end{theorem}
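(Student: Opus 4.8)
The plan is to bound the modulus of an arbitrary zero $z$ of $p$ directly from the relation $p(z)=0$, using the triangle inequality together with a geometric-series estimate, and then read off the interval statement for real zeros. First I would record the trivial observations. Since each ratio $|a_j/a_n|$ is nonnegative, we have $M\geq 0$ and hence $M+1\geq 1$; therefore any zero with $|z|\leq 1$ already satisfies $|z|\leq M+1$, and in particular $z=0$ (if it occurs) lies in $[-(M+1),M+1]$. It thus suffices to control a hypothetical zero with $|z|>1$.

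For such a zero I would start from $a_nz^n+a_{n-1}z^{n-1}+\cdots+a_0=0$, isolate the leading term as $a_nz^n=-\sum_{j=0}^{n-1}a_jz^{j}$, and divide through by $a_nz^n$, which is legitimate because $a_n\neq 0$ (degree $n$) and $z\neq 0$. This gives $1=-\sum_{j=0}^{n-1}\frac{a_j}{a_n}z^{\,j-n}$. Taking absolute values, applying the triangle inequality, and bounding each $\left|\frac{a_j}{a_n}\right|$ by $M$, I obtain $1\leq M\sum_{k=1}^{n}|z|^{-k}$, where I have reindexed by $k=n-j$ so that the exponents run over $-1,\dots,-n$.

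The decisive step is estimating the finite geometric sum. Because $|z|>1$, I would bound it by its convergent tail, $\sum_{k=1}^{n}|z|^{-k}<\sum_{k=1}^{\infty}|z|^{-k}=\frac{1}{|z|-1}$, which yields $1<\frac{M}{|z|-1}$ and hence $|z|<M+1$. This is exactly the point where the hypothesis $|z|>1$ is essential, since it is what makes the geometric series converge and the closed-form bound available. Combining the two cases shows every zero satisfies $|z|\leq M+1$; specializing to real zeros gives the stated interval $[-(M+1),M+1]$. I do not anticipate a genuine obstacle here: the only care required is the bookkeeping of strict versus non-strict inequalities and the verification that $M\geq 0$, so that the small-modulus case is truly covered.
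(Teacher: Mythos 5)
Your proof is correct: the split into the cases $|z|\leq 1$ and $|z|>1$, the normalization $1=-\sum_{j=0}^{n-1}\frac{a_j}{a_n}z^{j-n}$, and the geometric-series bound $\sum_{k=1}^{n}|z|^{-k}<\frac{1}{|z|-1}$ together give $|z|<M+1$ for any zero outside the closed unit disk, and hence $|z|\leq M+1$ for every zero; this is the standard argument for Cauchy's bound, handled with the right care about $M\geq 0$ and strict versus non-strict inequalities. Note, however, that the paper does not prove this statement at all --- it is quoted in the Preliminaries as a classical result (and immediately illustrated with an example) --- so there is no paper proof to compare against; your argument is self-contained and, by bounding the modulus of all complex zeros before specializing to real ones, actually proves slightly more than the interval statement given.
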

   Cauchy's bound tells us where we can find the real zeros of a polynomial. For instance, the real zeros of the polynomial $p(x) = 5x ^3 - 8 x ^2 +  x - 10$ lie in the interval $[-3,3].$
    \begin{theorem}[\textbf{Descartes' Rule of Signs} \cite{wang2004simple}] $\label{p''}$
    Let $p(x) = a_0x^{b_0}+ a_1x^{b_1}+ \cdots + a_nx^{b_n}$ denote a polynomial with nonzero real coefﬁcients $a_i,$ where the $b_i$ are integers satisfying $0 \leq b_0 < b_1< b_2< \cdots < b_n.$ Then the number of positive real zeros of $p(x)$ (counted with multiplicities) is either equal to the number of variations in sign in the sequence $a_0, \cdots , a_n$ of the coefficients or less than that by an even whole number. The number of negative zeros of p(x) (counted with multiplicities) is either equal to the number of variations in sign in the sequence of the coefficients of $p(-x)$ or less than that by an even whole number.
   \end{theorem}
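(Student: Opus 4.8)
The plan is to prove the assertion about positive zeros first, and then deduce the assertion about negative zeros by applying the positive case to $p(-x)$, whose positive zeros are exactly the negatives of the negative zeros of $p(x)$ and whose coefficient signs are those read off from $p(-x)$. Throughout, write $V(p)$ for the number of sign variations in the sequence $a_0,\ldots,a_n$ and $Z(p)$ for the number of positive real zeros of $p$ counted with multiplicity. The two facts to establish are $Z(p)\le V(p)$ and $Z(p)\equiv V(p)\pmod 2$, since together they say precisely that $Z(p)$ equals $V(p)$ or falls short of it by an even whole number. First I would normalize the polynomial: the common factor $x^{b_0}$ contributes no positive zero and does not change the sign sequence, so I may assume $b_0=0$, i.e. that $a_0$ is a genuine nonzero lowest-order coefficient.

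The heart of the argument is a lemma describing how multiplication by a linear factor with a positive root affects sign variations: if $r>0$, then $V\big((x-r)q\big)-V(q)$ is a positive odd integer for every real polynomial $q$. Granting this, I factor $p(x)=(x-r_1)(x-r_2)\cdots(x-r_k)\,s(x)$, where $r_1,\ldots,r_k>0$ are the positive zeros listed with multiplicity (so $k=Z(p)$) and $s$ gathers the remaining factors and has no positive zero. Applying the lemma once per linear factor yields $V(p)\ge V(s)+k$ and $V(p)\equiv V(s)+k\pmod 2$. I expect the proof of this lemma to be the main obstacle: writing the coefficients of $(x-r)q$ through the recursion $c_j=q_{j-1}-r\,q_j$, one must chase signs to show that every sign change of $q$ forces a sign change in the product while the bottom coefficient $-r\,q_0$ supplies one further change, and then confirm by an endpoint/parity count that the net increase is odd rather than merely positive.

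Finally I would pin down $V(s)$. Since $s$ has no positive real zero, it does not vanish on $(0,\infty)$ and therefore keeps a constant sign there; consequently $s(0^+)$ and $s(+\infty)$ share a sign, which means the lowest- and highest-degree coefficients of $s$ agree in sign. Because the parity of the number of sign variations of any real sequence is determined solely by whether its first and last nonzero entries have the same sign, it follows that $V(s)$ is even. Combining this with the previous paragraph gives $Z(p)=k\le V(s)+k\le V(p)$ and $Z(p)=k\equiv V(s)+k\equiv V(p)\pmod 2$, which is the positive-zero half of the theorem; the negative-zero half then follows from the reduction to $p(-x)$ noted at the outset.
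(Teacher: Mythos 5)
Your attempt cannot be matched against ``the paper's own proof'' for a simple reason: the paper never proves this statement. Descartes' Rule of Signs appears in the Preliminaries as a quoted known result whose proof is deferred to the cited source \cite{wang2004simple}, and the paper only comments on and illustrates it. The meaningful comparison is therefore with that cited proof, and your route is genuinely different from it. Wang's proof is calculus-based: an induction on the number of terms in which one differentiates, observes that the coefficient sequence of $p'$ has $V(p)$ or $V(p)-1$ sign variations, and combines Rolle's theorem ($Z(p)\le Z(p')+1$) with a refinement to $Z(p)\le Z(p')$ when $a_0$ and $a_1$ share a sign. Yours is the classical algebraic (Gauss-style) argument: factor out the positive roots, $p=(x-r_1)\cdots(x-r_k)s$, apply $k$ times the lemma that multiplication by $(x-r)$ with $r>0$ raises the variation count by a positive odd integer, and close by noting that $V(s)$ is even since $s$ keeps one sign on $(0,\infty)$ while the parity of a variation count is governed by the first and last nonzero coefficients. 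Both work; your version has the advantage of producing the inequality $Z(p)\le V(p)$ and the congruence $Z(p)\equiv V(p)\ (\mathrm{mod}\ 2)$ in one stroke, whereas the Rolle-based induction yields the inequality and needs the endpoint-sign remark separately for the parity clause. Your reduction of the negative-zero statement to $p(-x)$ is the standard one and is correct.

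The one point at which your write-up is a plan rather than a proof is the key lemma itself, which you state and motivate but do not establish; since everything rests on it, it must be carried out. The completion is exactly the sign chase you anticipate. Write the coefficients of $(x-r)q$ as $c_j=q_{j-1}-rq_j$, and split the coefficient sequence of $q$ into its $V(q)+1$ maximal blocks of constant sign. For each block, take its lowest-index entry $q_\ell$: then $c_\ell=q_{\ell-1}-rq_\ell$ is nonzero with sign opposite to that block, because $q_{\ell-1}$ either vanishes or lies in the preceding block, whose sign is already opposite. These $V(q)+1$ coefficients of the product, followed by its leading coefficient (which equals the leading coefficient of $q$), alternate in sign $V(q)+1$ times, so $V\bigl((x-r)q\bigr)\ge V(q)+1$. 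The oddness of the increase then follows from your endpoint observation: the constant term $-rq_0$ of the product has sign opposite to $q_0$ while the leading coefficients agree, so the parity of the variation count flips. With this lemma in place, the rest of your argument goes through as written.
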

   Note that as the Fundamental Theorem of Algebra gives us an upper bound on the total number of roots of a polynomial, Descartes' Rule of Signs gives us an upper bound on the total number of positive ones. According to Descartes's Rule of Signs, polynomial $p(x)$ has no more positive roots than it has sign changes and  has no more negative roots than  $p(-x)$  has sign changes. A polynomial may not achieve the maximum allowable number of roots given by the Fundamental Theorem, and likewise it may not achieve the maximum allowable number of positive roots given by the Rule of Signs. 
   \begin{example} 
Consider the polynomial $p(x) = x ^3 - 8 x ^2 + 17 x - 10.$ Proceeding from left to right, we see that the terms of the polynomial carry the signs  $+ - + -$  for a total of three sign changes. Descartes' Rule of Signs tells us that this polynomial may have up to three positive roots and has no negative root. In fact, it has exactly three positive roots $1, 2,$ and  $5.$
   \end{example}
   \begin{theorem}[\textbf{Marden,M.} \cite{marden1949geometry}]$\label{p'''}$
   All the zeros of the polynomial $f(z) = a_o + a_1z + \cdots +a_nz^n,~~ a_n\neq 0,$ lie in the closed disc $|z| \leq r,$ where r is a positive real root of the equation $|a_o| + |a_1| z + \cdots + |a_{n-1}| z^{n-1} -|a_n|z^n =0.$ 
\end{theorem}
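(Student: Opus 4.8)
The plan is to prove the contrapositive-flavored statement that \emph{every} zero $z_0$ of $f$ satisfies $|z_0|\le r$, by pairing the triangle inequality with a monotonicity property of the associated real polynomial
\[
F(x) = |a_0| + |a_1|x + \cdots + |a_{n-1}|x^{n-1} - |a_n|x^n .
\]
First I would justify that $F$ really has a positive real root $r$, and that it is essentially unique. Reading off the coefficient sequence $|a_0|,|a_1|,\dots,|a_{n-1}|,-|a_n|$ (discarding any vanishing entries among the first $n$), there is exactly one sign change, so Descartes' Rule of Signs (Theorem~\ref{p''}) guarantees exactly one positive real zero of $F$; existence can also be seen directly from $F(0)=|a_0|\ge 0$ together with $F(x)\to-\infty$ as $x\to+\infty$ and the intermediate value theorem. (The degenerate case in which $a_0=\cdots=a_{n-1}=0$ makes $f(z)=a_nz^n$ a monomial whose only zero is the origin, which is trivial, so I assume at least one lower coefficient is nonzero; this also ensures $r>0$.)

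Next I would extract the inequality $F(|z_0|)\ge 0$ from the equation $f(z_0)=0$. If $z_0=0$ the bound $|z_0|\le r$ holds trivially, so suppose $z_0\neq 0$. Rearranging $f(z_0)=0$ gives $a_nz_0^{\,n} = -\sum_{j=0}^{n-1} a_j z_0^{\,j}$, and applying the triangle inequality to the right-hand side yields
\[
|a_n|\,|z_0|^n \;\le\; \sum_{j=0}^{n-1} |a_j|\,|z_0|^j ,
\]
which is precisely the statement that $F(|z_0|)\ge 0$.

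The remaining, and genuinely delicate, step is to deduce $|z_0|\le r$ from $F(|z_0|)\ge 0$: this is \emph{not} immediate because $F$ is not monotone, so a single sign condition on $F$ need not localize its argument on one side of $r$. The device I would use is to divide by $x^n>0$ and study
\[
\psi(x) \;=\; \frac{F(x)}{x^n} \;=\; \sum_{j=0}^{n-1} |a_j|\,x^{\,j-n} - |a_n|,
\]
which \emph{is} strictly decreasing on $(0,\infty)$ since every exponent $j-n$ is negative and at least one coefficient $|a_j|$ is positive. Because $\psi(r)=F(r)/r^n=0$ while $\psi(|z_0|)=F(|z_0|)/|z_0|^n\ge 0=\psi(r)$, strict monotonicity of $\psi$ forces $|z_0|\le r$. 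As $z_0$ was an arbitrary zero of $f$, this shows all zeros lie in $|z|\le r$ and completes the argument. I expect the monotonicity reduction via $\psi$ to be the crux of the proof, with the existence and uniqueness of $r$ and the triangle-inequality estimate being routine.
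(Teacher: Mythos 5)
Your proof is correct, but there is no in-paper argument to compare it against: the paper states this result without proof, citing it as a classical theorem from Marden's book. Your write-up is self-contained and fills that gap. It follows the standard Cauchy-type route---the triangle inequality applied to $a_nz_0^{\,n}=-\sum_{j=0}^{n-1}a_jz_0^{\,j}$ gives $F(|z_0|)\ge 0$, where $F(x)=|a_0|+|a_1|x+\cdots+|a_{n-1}|x^{n-1}-|a_n|x^n$---but your handling of the delicate step, converting the sign condition $F(|z_0|)\ge 0$ into the bound $|z_0|\le r$, is organized differently from the classical treatment. The classical argument (and the paper's own proof of its Theorem \ref{a'}, which adapts this result to harmonic polynomials) leans on Descartes' Rule of Signs: $F$ has exactly one positive root $r$, hence, since $F(x)\to-\infty$, it is strictly negative on $(r,\infty)$, so $F(|z_0|)\ge 0$ forces $|z_0|\le r$. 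Your normalization $\psi(x)=F(x)/x^n$ achieves the same end more transparently: strict monotonicity of $\psi$ on $(0,\infty)$ simultaneously yields uniqueness of the positive root and the implication $\psi(|z_0|)\ge 0=\psi(r)\Rightarrow|z_0|\le r$, with Descartes reduced to a dispensable confirmation of existence. Two further points in your favor: you correctly isolate the degenerate case $a_0=\cdots=a_{n-1}=0$, in which $F$ has no positive root at all and the theorem's hypothesis is vacuous for the monomial $f(z)=a_nz^n$---a detail the statement itself glosses over---and you dispose of $z_0=0$ before dividing by $|z_0|^n$, so the monotonicity step is applied only where it is legitimate.
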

Note that Theorem $\ref{p'''}$ is a classical solution in finding an upper bound for the moduli \bigskip of all the zeros of the polynomial, which is analytic in general.\\  The following theorem is also another classical result for the location of the zeros of analytic complex polynomials that depends on algebraic equation's positive root. Descartes' Rule of Signs plays a great role in the proof as shown by Dehmer. 
   \begin{theorem}[\textbf{Dehmer, M.} \cite{dehmer2006location}]$\label{p''''}$
   Let $f(z) = a_nz^n+a_{n-1}z^{n-1}+ \cdots +a_1z+a_0, ~~~ a_n \neq 0$ be a complex polynomial. All zeros of $f(z)$ lie in the closed disc $K(0,\mathrm{max}(1,\delta)),$ where $M:= \mathrm{max}\{ |\frac{a_j}{a_n}| \} ~~\forall j=0,1,2,\cdots ,n-1$ and $\delta \neq 1$ denotes the positive real root of the equation $z^{n+1} - (1+M)z^n +M =0.$ 
   \end{theorem}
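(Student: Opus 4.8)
The plan is to reduce to a monic polynomial and to bound $|f(z)|$ from below on each circle $|z|=r$, converting the positivity of that bound into the algebraic equation in the statement. Since the zeros are unchanged under division by $a_n$, I may assume $a_n=1$, so that $M=\max_{0\le j\le n-1}|a_j|$ and $f(z)=z^n+a_{n-1}z^{n-1}+\cdots+a_0$. For $|z|=r$ the triangle inequality gives
\[
|f(z)|\ \ge\ r^n-\sum_{j=0}^{n-1}|a_j|\,r^j\ \ge\ r^n-M\sum_{j=0}^{n-1}r^j .
\]
Restricting to $r>1$ and summing the geometric series yields
\[
|f(z)|\ \ge\ r^n-M\,\frac{r^n-1}{r-1}\ =\ \frac{r^{n+1}-(1+M)r^n+M}{r-1}\ =:\ \frac{g(r)}{r-1},
\]
so that $f$ cannot vanish at any point where $g(r)>0$ while $r>1$.

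The heart of the argument is to locate the positive real zeros of $g$. I would factor
\[
g(r)=(r-1)\,q(r),\qquad q(r)=r^n-M\bigl(r^{n-1}+r^{n-2}+\cdots+r+1\bigr),
\]
so that the positive roots of $z^{n+1}-(1+M)z^n+M=0$ are exactly $1$ together with the positive roots of $q$. Applying Descartes' Rule of Signs (Theorem \ref{p''}) to $q$, whose coefficient sequence $+1,-M,\dots,-M$ exhibits a single sign change, shows that $q$ has exactly one positive real root; this is the number $\delta\neq1$ named in the statement, and this count is precisely what makes the phrase ``the positive real root'' well defined. Because $q(0)=-M<0$ and $q(r)\to+\infty$ as $r\to\infty$, its sign is negative on $(0,\delta)$ and positive on $(\delta,\infty)$.

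Finally I would combine these facts. If $r=|z|>\max(1,\delta)$, then $r-1>0$ and, since $r>\delta$, also $q(r)>0$; hence $g(r)=(r-1)q(r)>0$ and
\[
|f(z)|\ \ge\ \frac{g(r)}{r-1}\ =\ q(r)\ >\ 0 .
\]
Thus $f$ has no zero with $|z|>\max(1,\delta)$, so every zero lies in the closed disc $K(0,\max(1,\delta))$. I expect the delicate step to be the Descartes count isolating the unique positive root $\delta$ together with the accompanying sign analysis of $q$; the rest is routine estimation. The cutoff $\max(1,\delta)$, rather than $\delta$ alone, emerges naturally here because the estimate simultaneously needs $r>1$ (to keep $r-1>0$ when passing from $g$ back to $|f|$) and $r>\delta$ (to force $g>0$).
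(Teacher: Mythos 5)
Your proof is correct and follows essentially the same route as the paper: the paper states this result of Dehmer without proof in its preliminaries, but its own proof of the harmonic generalization (Theorem \ref{a'}) uses exactly your chain of reasoning --- triangle inequality, geometric-series summation into $\frac{r^{n+1}-(1+M)r^n+M}{r-1}$, and Descartes' Rule of Signs applied to $x^{n+1}-(1+M)x^n+M$. Your explicit factorization $g(r)=(r-1)q(r)$ and the accompanying sign analysis of $q$ merely make precise a step the paper leaves implicit, namely why the bound is strictly positive for all $r>\max(1,\delta)$.
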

\begin{theorem}[\textbf{Rouche's Theorem}]
Suppose $f$ and $g$ are holomorphic functions in the interior of a simple closed Jordan curve $C$ and continuous on $C$ with $| g(z)| < | f(z)|$ for all $z \in C.$ Then $f$ and $f + g$ have the same number of zeros in the interior of the curve, counting with multiplicities.
\end{theorem}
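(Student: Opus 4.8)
The plan is to deduce the result from the argument principle, which the introduction has already recalled in the form $\frac{1}{2\pi}\Delta_C \arg f(z) = N_f$, where $N_f$ denotes the number of zeros of $f$ inside $C$ counted with multiplicity. First I would verify that the argument principle actually applies to both $f$ and $f+g$, i.e.\ that neither vanishes on $C$. The hypothesis $|g(z)| < |f(z)|$ forces $|f(z)| > 0$ on $C$ (since $|g(z)| \geq 0$), so $f(z) \neq 0$ there; and by the reverse triangle inequality $|f(z) + g(z)| \geq |f(z)| - |g(z)| > 0$, so $f+g$ is likewise zero-free on $C$. Hence both winding numbers are well defined and both $N_f$ and $N_{f+g}$ make sense.

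Next I would factor $f + g = f\left(1 + \frac{g}{f}\right)$ on $C$, which is legitimate precisely because $f \neq 0$ there. Using the additivity of the change of argument under multiplication,
\[
\Delta_C \arg(f+g) = \Delta_C \arg f + \Delta_C \arg\left(1 + \frac{g}{f}\right).
\]
By the argument principle this reads $2\pi N_{f+g} = 2\pi N_f + \Delta_C \arg\left(1 + \frac{g}{f}\right)$, so the entire theorem reduces to showing that the final term vanishes.

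The crux — and the step I expect to require the most care — is proving $\Delta_C \arg\left(1 + \frac{g}{f}\right) = 0$. Set $w(z) = 1 + \frac{g(z)}{f(z)}$. Since $\left|\frac{g(z)}{f(z)}\right| < 1$ on $C$, the image curve $w(C)$ satisfies $|w(z) - 1| < 1$, so it lies entirely inside the open disk of radius $1$ centered at $1$. This disk omits the origin and is contained in the half-plane $\{\operatorname{Re} > 0\}$; consequently a continuous single-valued branch of $\arg w(z)$ exists along $C$ with values confined to $\left(-\frac{\pi}{2}, \frac{\pi}{2}\right)$, and since $C$ is closed the net change $\Delta_C \arg w$ must be $0$. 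Combining this with the displayed identity yields $N_{f+g} = N_f$, as required.

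Finally, I would record an alternative route that avoids the factorization: the homotopy $f_t = f + t g$, for $t \in [0,1]$, is zero-free on $C$ because $|t g| \leq |g| < |f|$, so $N(t) = \frac{1}{2\pi i}\oint_C \frac{f_t'(z)}{f_t(z)}\,dz$ is a continuous, integer-valued function of $t$ and hence constant, giving $N_f = N(0) = N(1) = N_{f+g}$. Both arguments work, but I would present the argument-principle version, since it is built directly from the tools already introduced in the paper.
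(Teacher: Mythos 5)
The paper itself states Rouch\'e's Theorem as a classical background result and offers no proof of it at all (it immediately moves on to \emph{apply} the theorem in the lemma that follows), so there is no paper proof to compare yours against; your proposal must stand on its own, and it does. Your argument is the standard and correct one: both $f$ and $f+g$ are zero-free on $C$ by the strict inequality and the reverse triangle inequality, the factorization $f+g=f\bigl(1+\tfrac{g}{f}\bigr)$ together with additivity of the change of argument reduces everything to showing $\Delta_C \arg\bigl(1+\tfrac{g}{f}\bigr)=0$, and the containment of the image of $1+\tfrac{g}{f}$ in the disk $\{|w-1|<1\}$ (hence in the right half-plane, which admits a single-valued branch of the argument) forces that change to vanish. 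This fits the paper particularly well, since the introduction explicitly recalls the argument principle in exactly the winding-number form you invoke. One technical caveat worth flagging: the theorem as stated assumes $f$ and $g$ are holomorphic only \emph{inside} $C$ and merely continuous \emph{on} $C$, whereas the argument principle quoted in the paper's introduction (and the derivative-based integral $\oint_C f_t'/f_t\,dz$ in your alternative homotopy route) requires analyticity on $C$ itself. To close this gap rigorously you would apply your argument on slightly shrunken interior curves $C_\rho$ on which everything is analytic, note that the strict inequality $|g|<|f|$ persists on $C_\rho$ for $\rho$ close enough to $1$ by continuity and compactness, and then observe that all zeros of $f$ and $f+g$ inside $C$ are captured by such curves. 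This is a one-paragraph repair, not a flaw in the idea, but under the hypotheses as literally stated it should be said.
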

  Not only counting the number of zeros of analytic functions, Rouche's Theorem also plays a great role in bounding all the zeros of analytic functions, as we prove in the following lemma. 
\begin{lemma}
The zeros of complex polynomial $g(z)=z^n+b_{n-1}z^{n-1}+ \cdots +b_1z+b_0$ all lie in the open disk centered at the origin with radius  $R= \sqrt{1+|b_{n-1}|^2+ \cdots +|b_1|^2+|b_0|^2}.$
\end{lemma}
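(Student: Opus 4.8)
The plan is to apply Rouché's Theorem on the circle $C=\{z:|z|=R\}$ with the natural splitting $g=f+G$, where $f(z)=z^{n}$ carries the leading term and $G(z)=b_{n-1}z^{n-1}+\cdots+b_{1}z+b_{0}$ is the lower-order remainder. If I can establish the strict boundary inequality $|G(z)|<|f(z)|=R^{n}$ for every $z\in C$, then Rouché's Theorem guarantees that $f$ and $f+G=g$ have the same number of zeros inside $C$. Since $z^{n}$ has exactly $n$ zeros (all at the origin) in $|z|<R$, the degree-$n$ polynomial $g$ must then have all $n$ of its zeros in the open disk $|z|<R$, which is precisely the claim.

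The heart of the argument is therefore the boundary estimate, and the key step is to apply the Cauchy--Schwarz inequality to the sum defining $G$. For $z$ with $|z|=R$, the triangle inequality followed by Cauchy--Schwarz gives $|G(z)|\le\sum_{j=0}^{n-1}|b_{j}|\,|z|^{j}\le\Big(\sum_{j=0}^{n-1}|b_{j}|^{2}\Big)^{1/2}\Big(\sum_{j=0}^{n-1}|z|^{2j}\Big)^{1/2}$. The first factor equals $\sqrt{R^{2}-1}$ by the very definition $R^{2}=1+|b_{n-1}|^{2}+\cdots+|b_{0}|^{2}$, while the second is the square root of the finite geometric sum $\sum_{j=0}^{n-1}R^{2j}=\frac{R^{2n}-1}{R^{2}-1}$ (valid since $R>1$ whenever some $b_{j}\neq0$). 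Multiplying the two factors, the quantities $R^{2}-1$ cancel and I obtain $|G(z)|\le\sqrt{R^{2n}-1}<R^{n}$, the strict inequality that Rouché's Theorem requires.

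The single point needing care is exactly this cancellation: it is the choice $R=\sqrt{1+|b_{n-1}|^{2}+\cdots+|b_{0}|^{2}}$ that makes the Cauchy--Schwarz bound collapse to $\sqrt{R^{2n}-1}$, so the one genuine idea in the proof is recognizing that Cauchy--Schwarz together with the definition of $R$ produces precisely the geometric sum that telescopes. I would also dispose of the degenerate case $b_{0}=\cdots=b_{n-1}=0$ separately, since there $R=1$, $g(z)=z^{n}$, and the only zero sits at the origin in $|z|<1$; this both keeps $G\not\equiv0$ in the main case and justifies the strict inequality $R>1$ used when summing the geometric series. With the boundary estimate secured, the conclusion follows immediately from Rouché's Theorem.
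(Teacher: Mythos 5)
Your proposal is correct and follows essentially the same route as the paper's own proof: the identical splitting $g(z)=z^{n}+G(z)$, the Cauchy--Schwarz estimate whose factors collapse to $\sqrt{R^{2}-1}\cdot\sqrt{(R^{2n}-1)/(R^{2}-1)}=\sqrt{R^{2n}-1}<R^{n}$ on $|z|=R$, and Rouch\'e's Theorem to conclude. Your explicit handling of the degenerate case $R=1$ (i.e., $g(z)=z^{n}$) also matches the paper.
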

\begin{proof}
 Note that $R=1$ if and only if $g(z)=z^n.$ In this case, the assertion is obviously true. Therefore, we may assume that $R>1.$ If $|z|=R,$ then using the Cauchy-Schwarz inequality \\  $|z^n-p(z)|=|b_{n-1}z^{n-1}+ \cdots +b_1z+b_0| \\ ~~~~~~~~~~~~~~~\leq \left(\sqrt{|b_{n-1}|^2+ \cdots +|b_1|^2+|b_0|^2}\right)\left(\sqrt{R^{2(n-1)}+R^{2(n-2)}+ \cdots +1}\right) \\~~~~~~~~~~~~~~~ = \left(\sqrt{R^2-1}\right)\left(\sqrt{\frac{R^{2n}-1}{R^2-1}}\right)\\ ~~~~~~~~~~~~~~~= \sqrt{R^{2n}-1}<R^n=|z^n|.$ \\ Therefore, by Rouche's Theorem the zeros of $g$ all lie in the open disk centered at the origin with radius  $R.$
\end{proof}
\section{Main Results}$\label{q''}$
Under this section, we bound all the zeros of complex-valued harmonic polynomials and specifically we bound the zeros of some families of trinomials to come up with a certain conclusion.
\subsection{Bounds for the zeros of general harmonic polynomials}$\label{p'}$
In this section we find an upper bound  on the moduli of all the zeros of complex-valued harmonic polynomials.
The following theorem derives a closed disk in which all zeros are included and hence it bounds the moduli of the zeros of the complex-valued harmonic  polynomials. 
 \begin{theorem}$\label{a'}$
 Let $$p(z) = h(z) + \overline{g(z)} = a_0+ \sum_{k=1}^n a_kz^k + \overline{\left(b_0+\sum_{k=1}^m b_kz^k\right)}$$ be a complex-valued harmonic polynomial with $\mathrm{deg}h(z) = n \geq m=\mathrm{deg}g(z)$ and let  $r \neq 1$ be positive real root of equation $~~x^{n+1} -(1+M)x^n+M=0~~$ where $M:= \mathrm{max}\left\{ \frac{|a_j|+|b_j|}{|a_n|}\right\}_{j=0}^{n-1}.$  Then all the zeros of $p(z)$ lie in the closed disk $D(0,R)$ where $R = \mathrm{max}(1, r).$ 
 \end{theorem}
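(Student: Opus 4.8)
The plan is to reduce the harmonic estimate to a purely analytic one by bounding $|p(z)|$ from below on the circle $|z|=x$, and then to analyze the resulting polynomial inequality exactly as in the proof of Dehmer's theorem (Theorem $\ref{p''''}$). First I would fix $z$ with $|z|=x>1$ and isolate the leading term $a_n z^n$. Since the conjugated terms $\overline{b_j z^j}$ satisfy $|\overline{b_j z^j}| = |b_j|\,x^j$ with index $j \le m \le n-1$, the triangle inequality gives
\[
|p(z)| \ge |a_n|\,x^n - \sum_{j=0}^{n-1}\bigl(|a_j| + |b_j|\bigr)\,x^j .
\]
Using the definition $|a_j|+|b_j| \le M\,|a_n|$ for $0 \le j \le n-1$ and summing the geometric series $\sum_{j=0}^{n-1} x^j = (x^n-1)/(x-1)$, this becomes
\[
|p(z)| \ge |a_n|\left( x^n - M\,\frac{x^n - 1}{x-1}\right).
\]

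Next I would convert positivity of the right-hand side into the polynomial inequality of the statement. Since $x-1>0$, the bracketed quantity is positive precisely when $x^n(x-1) - M(x^n-1) > 0$, that is, when $\phi(x) := x^{n+1} - (1+M)x^n + M > 0$. Thus it suffices to show $\phi(x)>0$ for every $x > R = \max(1,r)$, since then $p(z)\ne 0$ whenever $|z|>R$, forcing all zeros of $p$ into $D(0,R)$.

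The remaining work is the sign analysis of $\phi$, and here I would lean on the structure already exploited by Dehmer. One checks immediately that $\phi(1) = 1-(1+M)+M = 0$, so $x=1$ is always a root, while $\phi(0)=M>0$ and $\phi(x)\to+\infty$. Computing $\phi'(x) = x^{n-1}\bigl((n+1)x - n(1+M)\bigr)$ shows that on $(0,\infty)$ the function $\phi$ has a single interior critical point $x^{\ast} = n(1+M)/(n+1)$, decreasing before it and increasing after. Hence $\phi$ has exactly two positive roots, consistent with the two sign changes counted by Descartes' Rule of Signs (Theorem $\ref{p''}$): the root $x=1$ and a second positive root $r\ne 1$. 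Whenever $M\ne 1/n$ one has $x^{\ast}\ne 1$, so the minimum value $\phi(x^{\ast})$ is strictly negative, the two roots straddle $x^{\ast}$, and $\phi$ is positive beyond the larger of them. That larger root is exactly $\max(1,r)=R$, which completes the argument.

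The main obstacle I anticipate is not the estimate but the bookkeeping in the last step: one must verify that $R=\max(1,r)$ is genuinely the largest positive root of $\phi$ in each regime (namely $r>1$ when $M>1/n$, so $R=r$; and $r<1$ with the larger root equal to $1$ when $M<1/n$, so $R=1$), and then dispose of the borderline degeneracies $M=0$ (where $\phi(x)=x^n(x-1)$ and $R=1$) and $M=1/n$ (a double root at $1$, excluded by the hypothesis $r\ne 1$), so that the stated disk $D(0,R)$ is correct in every case.
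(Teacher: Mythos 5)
Your proposal is correct and takes essentially the same route as the paper's own proof: the same triangle-inequality reduction (using $|\overline{w}|=|w|$ to absorb the conjugated terms into the combined coefficients $|a_j|+|b_j|$), the same geometric-series estimate via $|a_j|+|b_j|\le M|a_n|$, and the same polynomial $x^{n+1}-(1+M)x^n+M$ whose positivity for $x>\max(1,r)$ forces $p(z)\neq 0$ whenever $|z|>R$. The only divergence is the final step, where the paper simply cites Descartes' Rule of Signs to assert that this polynomial has exactly two distinct positive roots, while you substitute an explicit derivative analysis (unique interior minimum at $x^{\ast}=n(1+M)/(n+1)$) plus the regime bookkeeping $M>1/n$, $M<1/n$, $M=0$, $M=1/n$; this is not a different method, but it makes the sign analysis self-contained and verifies, more carefully than the paper's rather terse conclusion, that $\max(1,r)$ really is the largest positive root in every case.
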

 \begin{proof}
 Observe that, since $\mathrm{deg}g(z)=m,$ we have $b_{m+1} = b_{m+2} = \cdots = b_{n-1} = 0.$ Hence $M = \mathrm{max}\left\{ \frac{|a_j|}{|a_n|}\right\} ~~ \forall j > m .$ Now by triangle inequality we have  $$\left| p(z)\right| = \left| \sum_{k=0}^n a_kz^k + \overline{\left(\sum_{k=0}^m b_kz^k\right)} \right| \geq \left| \left| a_nz^n\right| - \left[\left|\sum_{k=0}^{n-1}a_kz^k + \overline{\left(\sum_{k=0}^m b_kz^k\right)}\right|\right]\right| $$ Since $$\left|\sum_{k=0}^{n-1}a_kz^k + \overline{\left(\sum_{k=0}^m b_kz^k\right)}\right| \leq \sum_{k=0}^{n-1}\left|a_k||z^k \right| + \overline{\left(\sum_{k=0}^m\left|b_k||z^k \right|\right)}$$ and the modulus of any complex number equals with modulus of its conjugate, after some algebraic manipulations we arrive at $$ \left| p(z)\right| \geq |a_n| \left[|z|^n-\left(\frac{|a_{n-1}|}{|a_n|}|z|^{n-1}+ \cdots +\frac{|a_m|+|b_m|}{|a_n|}|z|^m+ \cdots +\frac{|a_0|+|b_0|}{|a_n|}\right)\right].$$ Then we get $$\left| p(z)\right| \geq |a_n|\left[|z|^n-M\left(|z|^{n-1}+ \cdots +1 \right)\right]=|a_n|\left[ \frac{|z|^{n+1}-(1+M)|z|^n+M}{|z|-1} \right].$$ Descartes' Rule of Signs gives us that the function $x^{n+1} -(1+M)x^n+M$ has exactly two distinct positive zeros, since $x=1$ is a root with multiplicity one \cite{wang2004simple}. We then conclude that $|p(z)| \neq 0 ~~ \forall z \in \mathbb{C}-\overline{D(0,R)}.$ Hence all the zeros of $p(z)$ lie in the closed disk $D(0,R)$ where $M:= \mathrm{max}\left\{ \frac{|a_j|+|b_j|}{|a_n|}\right\}_{j=0}^{n-1},$  $r \neq 1$ is a positive real root of equation $~~x^{n+1} -(1+M)x^n+M=0~~$ and $R = \mathrm{max}(1, r).$  
 \end{proof}
 \begin{corollary}$\label{a''}$
  Let  $ p_c(z) = z^n + c\overline{z}^k - 1 $ where  $c\in \mathbb{R^+},~~1 \leq k \leq n - 1,~~ n \geq 3,$ and $\mathrm{gcd}(n, k) = 1$ and $r \neq 1$ be  positive real root of equation $x^{n+1} -(1+M)x^n+M=0$ where $M:= \mathrm{max}\left( 1,|c|\right).$ Then all the zeros of $p_c(z)$ lie in the closed disc $D(0,R)$  where $R = \mathrm{max}(1, r).$
 \end{corollary}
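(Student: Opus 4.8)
The plan is to obtain this as a direct specialization of Theorem~\ref{a'}. First I would write $p_c(z) = h(z) + \overline{g(z)}$ by identifying $h(z) = z^n - 1$ and $g(z) = cz^k$; since $c \in \mathbb{R}^+$ is real, $\overline{g(z)} = \overline{cz^k} = c\overline{z}^k$, which matches the given form of $p_c$. Reading off the coefficients against the notation of Theorem~\ref{a'}, we have $a_n = 1$, $a_0 = -1$, and $a_j = 0$ otherwise, while $b_k = c$ and $b_j = 0$ otherwise. In particular $\deg h = n$ and $\deg g = k$, and since $1 \le k \le n-1$ we have $n \ge m = k$, so the degree hypothesis of Theorem~\ref{a'} is satisfied.

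Next I would compute the constant $M$ appearing in Theorem~\ref{a'}. Because $|a_n| = 1$, the definition reduces to $M = \max\left\{|a_j| + |b_j|\right\}_{j=0}^{n-1}$. The only indices contributing a nonzero value are $j = 0$, where $|a_0| + |b_0| = 1$, and $j = k$, where $|a_k| + |b_k| = c = |c|$ (using $c > 0$); every other term vanishes. Hence $M = \max(1, |c|)$, which is precisely the quantity stated in the corollary.

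With the hypotheses verified and $M$ identified, the conclusion is immediate: Theorem~\ref{a'} guarantees that the equation $x^{n+1} - (1+M)x^n + M = 0$ has a positive real root $r \neq 1$ and that all zeros of $p_c(z)$ lie in the closed disk $D(0,R)$ with $R = \max(1, r)$. Since no further analytic content is required, I do not expect a genuine obstacle; the only points requiring care are bookkeeping ones, namely confirming that conjugation sends $cz^k$ to $c\overline{z}^k$ precisely because $c$ is real, and that the maximum over $0 \le j \le n-1$ correctly excludes the leading term $a_n z^n$. The hypothesis $\gcd(n,k) = 1$ plays no role in this bound and is simply inherited from the family under study.
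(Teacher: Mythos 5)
Your proposal is correct and takes essentially the same route as the paper: the corollary is obtained as a direct specialization of Theorem~\ref{a'}, with the decomposition $h(z)=z^n-1$, $g(z)=cz^k$ and the computation $M=\max(1,|c|)$. If anything, your coefficient bookkeeping is more careful than the paper's own proof, which supplements the appeal to Theorem~\ref{a'} with an inequality chain ($|p_c(z)| \geq |z|^n - M|z|^k + M$) whose sign on the constant term is not justified as written, whereas your verification of the hypotheses needs no such step.
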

\begin{proof}
 This corollary follows directly from Theorem $(\ref{a'})$ since it bounds all the zeros of any complex-valued harmonic polynomials. Now $$|p_c(z)| = |z^n+c\overline{z}^k -1| \geq |z|^n-M|z|^k+M \geq |z|^n-(1+M)|z|^k+M.$$ This shows that $|p_c(z)| > 0$ for $|z|>R.$ Hence the result.
 \end{proof}
 \subsection{Sharper bounds for the zeros of harmonic trinomials}$\label{p''}$
 In the previous section, we have found the upper bound on the moduli of all the zeros of complex-valued harmonic polynomials stated in Theorem $\ref{a'}$.  In the following theorem we find upper and lower bounds for the moduli of the zeros of harmonic trinoimial $p_c(z) = z^n + c\overline{z}^k - 1$ by considering  different cases.
  \begin{theorem}$\label{a}$
 Let  $ p_c(z) = z^n + c\overline{z}^k - 1 $ where  $1 \leq k \leq n - 1,~~ n \geq 3,$ and $\mathrm{gcd}(n, k) = 1.$ Then the zeros of $p_c(z)$ lies in the annular region 
 \begin{itemize}
 \item[(a)]  $(1-c)^{\frac{1}{n-k}} < |z| < (1+c)^{\frac{1}{n-k}}$ if $0<c<1.$
 \item[(b)] $(c-1)^{\frac{1}{n-k}} \leq |z| \leq (1+c)^{\frac{1}{n-k}}$ if $z$ is a zero of $p_c(z)$ with $|z| \geq 1$ and  $c\geq1.$
\item[(c)] $ (|c|-1)^{\frac{1}{n-k}} \leq |z| \leq (1+|c|)^{\frac{1}{n-k}}$   if $z$ is a zero of $p_c(z)$ with $|z| \geq 1$ and $ c \in \mathbb{C}.$
\end{itemize}
 \end{theorem}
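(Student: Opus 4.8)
The plan is to work directly from the defining relation of a zero. If $p_c(z)=0$ then $z^n = 1 - c\overline{z}^k$, and since conjugation preserves modulus we get $|z|^n = |1 - c\overline{z}^k|$ with $|\overline{z}^k| = |z|^k$. Writing $\rho = |z|$ (note $\rho>0$, since $z=0$ gives $p_c(0)=-1\neq 0$), the entire argument reduces to extracting inequalities for $\rho$ from this single modular identity. The natural tools are the triangle inequality $|1 - c\overline{z}^k| \le 1 + |c|\rho^k$ and the reverse triangle inequality, which supplies both $|1 - c\overline{z}^k| \ge 1 - |c|\rho^k$ and $|1 - c\overline{z}^k| \ge |c|\rho^k - 1$. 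Combining these with $\rho^n = |1 - c\overline{z}^k|$ yields
\[
|c|\rho^k - 1 \;\le\; \rho^n \;\le\; 1 + |c|\rho^k,
\qquad
\rho^n \;\ge\; 1 - |c|\rho^k .
\]

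The key manoeuvre — and the step I expect to carry the proof — is to divide each estimate through by $\rho^k$ so that the target exponent $n-k$ appears; for instance $\rho^n \le 1 + c\rho^k$ becomes $\rho^{n-k} \le \rho^{-k} + c$. The desired bounds then follow by controlling the stray factor $\rho^{-k}$ via the hypothesis on $\rho$. When $\rho \ge 1$ we have $\rho^{-k} \le 1$, so the upper estimate collapses to $\rho^{n-k} \le 1 + c$, while the reverse estimate $\rho^n \ge c\rho^k - 1$ gives $\rho^{n-k} \ge c - \rho^{-k} \ge c - 1$. Raising to the power $1/(n-k)$ (legitimate since $n-k\ge 1$ and all quantities compared are nonnegative) produces exactly the closed bounds of parts (b) and (c); part (c) is handled by the identical computation with $c$ replaced by $|c|$ throughout.

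For part (a), where $0<c<1$ and no restriction is imposed on $\rho$, I would split into the subcases $\rho \ge 1$ and $\rho < 1$. The subcase $\rho \ge 1$ is treated as above and in fact delivers \emph{strict} inequalities because $c>0$. In the subcase $\rho<1$ we have $\rho^{-k}>1$, so dividing $\rho^n \ge 1 - c\rho^k$ by $\rho^k$ gives $\rho^{n-k} \ge \rho^{-k} - c > 1 - c$, whereas the upper bound is immediate since $\rho<1$ forces $\rho^{n-k} < 1 < 1+c$. Assembling the two subcases yields the strict two-sided bound $(1-c)^{1/(n-k)} < \rho < (1+c)^{1/(n-k)}$.

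The only genuine obstacle is the bookkeeping: selecting the correct form of the reverse triangle inequality in each regime (the $1 - |c|\rho^k$ form when $0<c<1$, the $|c|\rho^k - 1$ form when $c\ge 1$), and carefully tracking which inequalities are strict and which are weak, so that part (a) emerges strict while parts (b) and (c) emerge as closed annuli. No deeper machinery is needed — the whole result is a consequence of the two triangle inequalities together with the division by $\rho^k$.
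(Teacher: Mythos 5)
Your proof is correct, and for parts \textit{(b)} and \textit{(c)} it is essentially the paper's own argument: both extract the bounds from $|z^n + c\overline{z}^k| = 1$ via the triangle and reverse triangle inequalities together with the hypothesis $|z| \geq 1$. Where you genuinely diverge is part \textit{(a)}. The paper reaches the strict annulus $(1-c)^{\frac{1}{n-k}} < |z| < (1+c)^{\frac{1}{n-k}}$ through a chain of lemmas: Kennedy's bound for the analytic trinomial (Lemma~\ref{b}), an unproved transfer claim that the harmonic trinomial's zeros obey the same ring as its analytic counterpart, and a root-comparison argument (Lemma~\ref{f}) showing that the positive roots $\beta_1, \beta_2$ of $x^{n-k}+c-1=0$ and $x^{n-k}-c-1=0$ bracket the radii $\alpha_1, \alpha_2$ of Lemma~\ref{e}; part \textit{(a)} then merely solves for $\beta_1, \beta_2$ explicitly. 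You instead prove \textit{(a)} directly, splitting into $|z| \geq 1$ and $|z| < 1$ and controlling the stray factor $|z|^{-k}$, which makes the entire theorem a short, self-contained consequence of the two triangle inequalities. Your route buys rigor as well as brevity: it bypasses the transfer claim, and it avoids the paper's factorization $1 = |z^k||z^{n-k}+c|$, which as written is false for the harmonic trinomial (since $z^n + c\overline{z}^k \neq z^k(z^{n-k}+c)$ in general), even though the inequality the paper wants from it is recoverable, exactly as you recover it, from the reverse triangle inequality. What the paper's longer route buys is contact with Kennedy's classical results and the additional information that a tighter, implicitly defined ring $\alpha_1 \leq |z| \leq \alpha_2$ (with $\alpha_1, \alpha_2$ the positive roots of $x^n + cx^k - 1 = 0$ and $x^n - cx^k - 1 = 0$) sits inside the explicit annulus of part \textit{(a)}. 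One small caveat, common to both proofs: in part \textit{(c)} the lower bound is vacuous, and $(|c|-1)^{\frac{1}{n-k}}$ is not defined as a real number, when $|c| < 1$; your chain $\rho^{n-k} \geq |c| - \rho^{-k} \geq |c| - 1$ remains valid there, the bound simply carries no information.
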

We will prove this theorem after the following lemmas.
 The following two lemmas, lemma $(\ref{b})$  and lemma $(\ref{c}),$ are due to Kennedy  \cite{kennedy1940bounds}.
 \begin{lemma} $\label{b}$
 If $x_1$ and $x_2$ are positive real roots of the equations 
 $$ x^n+|a|x^k-|b| = 0~~ and ~~ x^n-|a|x^k-|b| = 0 $$ respectively, then the roots of $z^n+az^k+b=0,$ lies in the ring $x_1 \leq |z| \leq x_2$ where $ab \neq 0 ,~~ 1 \leq k \leq n - 1,~~ n \geq 3, $ and $\mathrm{gcd}(n, k) = 1.$
 \end{lemma}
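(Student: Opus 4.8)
The plan is to bound the modulus of an arbitrary root of $z^n+az^k+b=0$ directly from the defining equation, using the triangle inequality for the upper bound and its reverse form for the lower bound, comparing in each case against the corresponding real auxiliary polynomial. To make the statement well posed I would first record that each auxiliary equation has a unique positive root. Set $f_1(x)=x^n+|a|x^k-|b|$ and $f_2(x)=x^n-|a|x^k-|b|$. Since $b\ne 0$, we have $f_1(0)=f_2(0)=-|b|<0$, and both functions tend to $+\infty$ as $x\to\infty$. Because $f_1'(x)=nx^{n-1}+k|a|x^{k-1}>0$ for $x>0$, the function $f_1$ is strictly increasing, so it has a single positive root $x_1$ with $f_1<0$ on $(0,x_1)$ and $f_1>0$ on $(x_1,\infty)$. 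For $f_2$ the coefficient sequence $+,-,-$ has exactly one sign change, so Descartes' Rule of Signs (Theorem $\ref{p''}$) yields exactly one positive root $x_2$; combined with $f_2(0)<0$, this forces $f_2<0$ on $(0,x_2)$ and $f_2>0$ on $(x_2,\infty)$.

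Now let $z$ be any root, so that $z^n=-(az^k+b)$. For the upper bound, the triangle inequality gives $|z|^n=|az^k+b|\le|a|\,|z|^k+|b|$, that is, $f_2(|z|)\le 0$; by the sign structure of $f_2$ this means $|z|\le x_2$. For the lower bound, the reverse triangle inequality gives $|z|^n=|az^k+b|\ge|b|-|a|\,|z|^k$, that is, $f_1(|z|)\ge 0$; since $f_1$ is increasing with root $x_1$, this means $|z|\ge x_1$. Combining the two estimates yields $x_1\le|z|\le x_2$ for every root $z$, as claimed.

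The one point requiring care is the sign bookkeeping for $f_2$, which, unlike $f_1$, is not monotone on $(0,\infty)$ but first decreases and then increases. Consequently the inequality $f_2(|z|)\le 0$ does not by itself pin down $|z|$ until one knows that the solution set of $f_2\le 0$ is exactly the interval $[0,x_2]$; this is precisely what the uniqueness of the positive root, guaranteed by Descartes' Rule together with $f_2(0)<0$, provides. The lower bound is the easier half, since the strict monotonicity of $f_1$ makes the passage from $f_1(|z|)\ge 0$ to $|z|\ge x_1$ immediate.
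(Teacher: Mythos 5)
Your proof is correct and complete. Note, however, that the paper does not actually prove this lemma: it states it as a known result of Kennedy \cite{kennedy1940bounds} and uses it as a black box, remarking only afterwards (with reference to \cite{anderson1998descartes} and \cite{wang2004simple}) that each of the two auxiliary equations has exactly one positive real root. Your argument supplies precisely what that citation hides, and it is in substance the classical proof: from $z^n=-(az^k+b)$ the triangle inequality gives $f_2(|z|)\le 0$ and the reverse triangle inequality gives $f_1(|z|)\ge 0$, after which the sign analysis of $f_1$ and $f_2$ converts these inequalities into $|z|\le x_2$ and $|z|\ge x_1$. You also correctly isolate the one genuinely delicate point: since $f_2$ is not monotone on $(0,\infty)$, the implication $f_2(|z|)\le 0 \Rightarrow |z|\le x_2$ requires knowing that $\{x\ge 0 : f_2(x)\le 0\}$ is exactly the interval $[0,x_2]$, and your combination of Descartes' Rule of Signs (one sign change in the pattern $+,-,-$) with $f_2(0)=-|b|<0$ establishes this; the strict monotonicity of $f_1$ then makes the lower bound immediate. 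What your write-up buys over the paper's treatment is self-containedness --- in particular it makes visible that the hypotheses $n\ge 3$ and $\mathrm{gcd}(n,k)=1$ play no role in this lemma --- while the paper's citation buys only brevity.
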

 Note that in accordance with work of Anderson \textit{et al} \cite{anderson1998descartes} and Wang \cite{wang2004simple} both trinomial equations in lemma $(\ref{b})$ have only one positive real root.
 \begin{lemma} $\label{c}$
 If $y_1$ and $y_2$ are positive real roots of the equations $$ x^{n-k} - |b|^{\frac{(n-k)}{n}} + |a|=0 ~~ and ~~  x^{n-k} - |b|^{\frac{(n-k)}{n}} - |a|=0 $$ respectively, then the roots of $z^n+az^k+b=0,$ lies in the ring $y_1 < |z| < y_2$  where $ab \neq 0 ,~~ 1 \leq k \leq n - 1,~~ n \geq 3, $ and $\mathrm{gcd}(n, k) = 1.$
 \end{lemma}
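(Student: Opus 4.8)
The plan is to deduce Lemma \ref{c} from Lemma \ref{b}, which has already placed every root $z$ of $z^n+az^k+b=0$ in the ring $x_1\le|z|\le x_2$, where $x_1$ is the unique positive root of $x^n+|a|x^k-|b|=0$ and $x_2$ the unique positive root of $x^n-|a|x^k-|b|=0$. It therefore suffices to show that the explicit quantities $y_1=(|b|^{(n-k)/n}-|a|)^{1/(n-k)}$ and $y_2=(|b|^{(n-k)/n}+|a|)^{1/(n-k)}$ satisfy $y_1<x_1$ and $x_2<y_2$; chaining these with Lemma \ref{b} then yields $y_1<x_1\le|z|\le x_2<y_2$.

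First I would record the pivotal comparison $x_1<|b|^{1/n}<x_2$. Indeed, the defining relation $x_1^n+|a|x_1^k=|b|$ forces $x_1^n=|b|-|a|x_1^k<|b|$ because $a\neq0$ and $x_1>0$, so $x_1<|b|^{1/n}$; symmetrically $x_2^n=|b|+|a|x_2^k>|b|$ gives $x_2>|b|^{1/n}$. The hypothesis $ab\neq0$ is exactly what makes both inequalities strict, and this strictness is what will ultimately upgrade the weak inequalities of Lemma \ref{b} to the strict ring asserted in Lemma \ref{c}.

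Next I would convert the $x^n$-type relations into the $x^{n-k}$-type relations of the statement by dividing through by $x_i^k$. Dividing $x_1^n+|a|x_1^k=|b|$ by $x_1^k$ yields $x_1^{n-k}+|a|=|b|x_1^{-k}$, and since $x_1<|b|^{1/n}$ gives $x_1^{-k}>|b|^{-k/n}$, the right-hand side exceeds $|b|\cdot|b|^{-k/n}=|b|^{(n-k)/n}$; hence $x_1^{n-k}>|b|^{(n-k)/n}-|a|=y_1^{n-k}$ and so $x_1>y_1$. The upper bound is the mirror image: dividing $x_2^n-|a|x_2^k=|b|$ by $x_2^k$ gives $x_2^{n-k}-|a|=|b|x_2^{-k}<|b|^{(n-k)/n}$, now using $x_2>|b|^{1/n}$, whence $x_2^{n-k}<|b|^{(n-k)/n}+|a|=y_2^{n-k}$ and $x_2<y_2$.

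I expect the only real subtlety to be bookkeeping rather than a genuine obstacle. The substantive content is the single observation $x_1<|b|^{1/n}<x_2$; once it is in hand the two conversions are routine, and the monotonicity of $x\mapsto x^{n-k}$ on $(0,\infty)$ lets me pass freely between an inequality on $x^{n-k}$ and one on $x$ itself. The one point that needs a word of care is the existence of $y_1$: the equation $x^{n-k}=|b|^{(n-k)/n}-|a|$ has a positive real root only when $|b|^{(n-k)/n}>|a|$, which is precisely the regime in which the lemma asserts a nontrivial lower bound; when $|b|^{(n-k)/n}\le|a|$ the lower estimate degenerates to the trivial $|z|>0$, valid since $b\neq0$, while the upper bound $|z|<y_2$ persists unconditionally. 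I would state this caveat explicitly so as to match the hypothesis that $y_1$ and $y_2$ are genuine positive roots.
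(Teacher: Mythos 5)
Your proof is correct, but there is a wrinkle in the comparison: the paper never proves Lemma \ref{c} at all. It states the lemma and attributes it, together with Lemma \ref{b}, to Kennedy \cite{kennedy1940bounds}, so there is no in-paper argument to match against. The closest analogue in the paper is the proof of Lemma \ref{f}, where the authors carry out precisely your bracketing strategy in the special case $a=c$, $b=-1$ (so $|b|^{(n-k)/n}=1$): they show $\beta_1<\alpha_1$ and $\alpha_2<\beta_2$ by evaluating $\mathcal{F}(x)=x^n+cx^k-1$ at $\beta_1$, using the defining relation $\beta_1^{n-k}+c=1$ to get $\mathcal{F}(\beta_1)=\beta_1^k-1<0$, and then invoking monotonicity of $\mathcal{F}$ on $(0,\infty)$. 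Your argument generalizes this to arbitrary $b$ via the pivot inequality $x_1<|b|^{1/n}<x_2$ followed by division of the defining relations by $x_i^k$, which is exactly the extra idea needed when $|b|\neq 1$; the paper's substitution trick works so cleanly only because $|b|=1$ there. Two further points in your favor: you track where strictness comes from, so that the weak ring $x_1\le|z|\le x_2$ of Lemma \ref{b} correctly upgrades to the strict ring of Lemma \ref{c} through the chain $y_1<x_1\le|z|\le x_2<y_2$ (with $ab\neq 0$ supplying the strict outer inequalities); and you flag the existence caveat for $y_1$ when $|a|\ge|b|^{(n-k)/n}$, a regime the lemma's hypothesis quietly assumes away. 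Finally, your monotonicity step (strict increase of $x\mapsto x^{n-k}$ on $(0,\infty)$) is a cleaner formulation of what the paper, in Lemma \ref{f}, phrases somewhat loosely as applying ``the inverse function $(\mathcal{F})^{-1}$.''
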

Note that the roots of trinomial equation $z^n+az^k+b=0$ and the roots of its harmonic equivalent are bounded in the same ring with respect to their modulus.  In particular, since $ p_c(z) = z^n + c\overline{z}^k - 1 =0$ is a harmonic equivalent to $q(z)=z^n+az^k+b=0,$ the zero inclusion regions of $p$ and $q$ are the same.\\
Combining lemma $(\ref{e})$ and lemma $(\ref{f})$ plays a crucial role to prove Theorem $\ref{a}.$
 \begin{lemma}$\label{e}$
  Let $\alpha _1$ and $\alpha _2$ be positive real roots of  the equations $$ x^n+ cx^k-1=0 ~~ and~~ x^n-cx^k-1=0 $$ respectively,  where  $1 \leq k \leq n - 1,~~ n \geq 3, ~~0<c<1,$ and $\mathrm{gcd}(n, k) = 1.$ Then $p_c(z) = z^n + c\overline{z}^k - 1 $ has n distinct zeros and all the zeros lies in the ring $\alpha _1 \leq |z| \leq \alpha _2.$
 \end{lemma}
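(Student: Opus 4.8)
The plan is to prove the two assertions separately: the ring containment reduces to the analytic trinomial via Kennedy's Lemma \ref{b}, while the count of $n$ distinct zeros is obtained from the argument principle for harmonic functions. For the containment, observe that $p_c$ is the harmonic equivalent of the analytic trinomial $q(z)=z^n+cz^k-1$, so in the notation of Lemma \ref{b} we have $a=c$ and $b=-1$, giving $|a|=c$ and $|b|=1$. The two auxiliary equations $x^n+|a|x^k-|b|=0$ and $x^n-|a|x^k-|b|=0$ of Lemma \ref{b} are then exactly $x^n+cx^k-1=0$ and $x^n-cx^k-1=0$, whose unique positive roots are $\alpha_1$ and $\alpha_2$. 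By the remark preceding this lemma, $p_c$ and its analytic equivalent $q$ confine their zeros to the same modular ring, so every zero of $p_c$ satisfies $\alpha_1\le |z|\le \alpha_2$. Evaluating the two defining polynomials at $x=1$ and using $0<c<1$ also records the useful fact $\alpha_1<1<\alpha_2$.

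For the count I would use the argument principle for harmonic functions (Duren et al. \cite{duren1996argument}). Writing $p_c=h+\overline{g}$ with $h(z)=z^n-1$ and $g(z)=cz^k$, the dilatation is $\omega=g'/h'=(ck/n)\,z^{-(n-k)}$, so $p_c$ is sense-preserving, with Jacobian $J=|h'|^2-|g'|^2>0$, precisely on $|z|>\rho$ where $\rho=(ck/n)^{1/(n-k)}$. The crux is to show that the entire annulus lies in this sense-preserving region, i.e. that $\alpha_1>\rho$. Since $\alpha_1$ satisfies $\alpha_1^n+c\alpha_1^k=1$, the inequality $\alpha_1^{n-k}>ck/n$ is equivalent, after substituting $c\alpha_1^k=1-\alpha_1^n$, to $\alpha_1^n>k/(n+k)$; and because $c<1$ this follows from the purely elementary estimate $(n/(n+k))^{n/k}>k/(n+k)$, valid for $1\le k<n$ with equality only in the excluded case $k=n$. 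Establishing this last inequality cleanly is, I expect, the main obstacle, since everything else is bookkeeping.

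With $\alpha_1>\rho$ in hand, no zero of $p_c$ lies on or inside the critical circle $|z|=\rho$, so every zero is sense-preserving and nondegenerate; nondegenerate zeros of a harmonic map are isolated and simple, which is exactly why the number of zeros coincides with the number of \emph{distinct} zeros. Applying the argument principle on a circle $|z|=R$ with $R>\alpha_2$, where $p_c(z)/z^n\to 1$ and hence $\tfrac{1}{2\pi}\Delta_C\arg p_c=n$, gives $N_+-N_-=n$ for the counts of sense-preserving and sense-reversing zeros; since $N_-=0$ we conclude that $p_c$ has exactly $n$ distinct zeros, all lying in $\alpha_1\le|z|\le\alpha_2$. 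The hypothesis $\gcd(n,k)=1$ enters to guarantee, as in Kennedy's setting, that the associated trinomial has simple roots and that no degeneracy occurs on the bounding circles. As an alternative to the sense-preserving computation, one could instead invoke the zero count of Brilleslyper et al. \cite{brilleslyper2020zeros} directly for the regime $0<c<1$.
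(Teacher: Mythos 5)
Your proof of the ring containment is essentially the paper's own: at a zero, $1=|z^n+c\overline{z}^k|$, and since only $|z|$ enters the resulting two-sided triangle-inequality estimates, Kennedy's Lemma \ref{b} applies verbatim with $a=c$, $b=-1$; the paper does exactly this. The genuine difference is in the count of $n$ distinct zeros. The paper settles it in one sentence by citing Brilleslyper et al. \cite{brilleslyper2020zeros} (your stated fallback), while your primary route reproves the relevant case of that theorem: you compute the dilatation $\omega(z)=(ck/n)z^{-(n-k)}$, locate the critical circle $|z|=\rho=(ck/n)^{1/(n-k)}$, show $\alpha_1>\rho$, conclude every zero is sense-preserving with nonvanishing Jacobian (hence simple), and read off the count $n$ from the winding number of $p_c$ on a large circle via the harmonic argument principle \cite{duren1996argument}. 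This buys a self-contained lemma and explains \emph{why} the count is $n$ in this regime, at the cost of one extra inequality, which you flag but do not prove. That inequality is true, and the gap closes in a few lines: since $x\mapsto x^n+cx^k$ is increasing and $c<1$, the root $\alpha_1$ exceeds the positive root $\beta$ of $x^n+x^k=1$, so by your reduction it suffices to show $\beta^n>k/(n+k)$, equivalently $\beta^k<n/(n+k)$; by monotonicity of $u\mapsto u^{n/k}+u$ this follows once $(n/(n+k))^{n/k}+n/(n+k)>1$, which is exactly your estimate $(n/(n+k))^{n/k}>k/(n+k)$. Setting $t=n/k>1$, that estimate is equivalent to $\varphi(t):=t\ln t-(t-1)\ln(t+1)>0$; since $\varphi(1)=0$ and $\varphi'(t)=\tfrac{2}{t+1}-\ln(1+1/t)>\tfrac{2}{t+1}-\tfrac{1}{t}=\tfrac{t-1}{t(t+1)}\ge 0$ for $t\ge 1$, the claim holds. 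With that supplied, your argument is complete and correct, and is strictly more informative than the paper's citation-based proof of the zero count.
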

 \begin{proof}
 It was shown in \cite{brilleslyper2020zeros} that $p_c(z)$ has n distinct zeros. 
From $p_c(z) = z^n + c\overline{z}^k - 1 , $ we have by triangle inequality
$|z^n + c\overline{z}^k| = 1 \leq |z|^n + c|\overline{z}|^k = |z|^n+c|z|^k .$ Now then consider lemma $(\ref{b})$. By taking the values  $a = c$ and $b=-1$, the forward substitution helps us to finish the desired proof.
 \end{proof}
 Lemma $(\ref{c})$ also helps us to conclude the following result. Here by following the same procedures, we show that  $\beta_1 <\alpha_1$ and $\beta_2 > \alpha_2$.
  \begin{lemma}$\label{f}$
  The root of harmonic trinomial $p_c(z) = z^n + c\overline{z}^k - 1 $ lies in the ring $\beta _1 < |z| < \beta _2$ where $\beta _1$ and $\beta_2$ are the positive real roots of the equations $ x^{n-k}+c-1 =0$ and $x^{n-k}-c-1=0$  respectively. 
  \end{lemma}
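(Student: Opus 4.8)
The plan is to treat Lemma~\ref{f} as the harmonic analogue of Kennedy's Lemma~\ref{c}, exactly as Lemma~\ref{e} is the harmonic analogue of Lemma~\ref{b}. First I would observe that $p_c(z) = z^n + c\overline{z}^k - 1$ is the harmonic equivalent of the analytic trinomial $q(z) = z^n + c z^k - 1$, obtained from the general form $z^n + a z^k + b$ by setting $a = c$ and $b = -1$. Since $|\overline{z}^k| = |z|^k$, every triangle-inequality estimate used to bound the modulus of a zero of $q$ involves only $|z|$, so these estimates apply verbatim to the zeros of $p_c$; this is precisely the equivalence of zero-inclusion rings noted in the remark following Lemma~\ref{c}.

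Next I would apply Lemma~\ref{c} with $a = c$ and $b = -1$, so that $|a| = c$, $|b| = 1$, and $|b|^{(n-k)/n} = 1$. The two equations of Lemma~\ref{c} then reduce to $x^{n-k} + c - 1 = 0$ and $x^{n-k} - c - 1 = 0$, whose positive real roots are exactly $\beta_1 = (1-c)^{1/(n-k)}$ and $\beta_2 = (1+c)^{1/(n-k)}$. Lemma~\ref{c} immediately yields $\beta_1 < |z| < \beta_2$ for every zero $z$ of $q$, and hence, by the equivalence above, for every zero of $p_c$.

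Finally, to record how this ring compares with the one from Lemma~\ref{e}, I would verify $\beta_1 < \alpha_1$ and $\beta_2 > \alpha_2$ by a sign check. Writing $g(x) = x^n + c x^k - 1$, one computes $g(\beta_1) = \beta_1^{k}(\beta_1^{n-k} + c) - 1 = \beta_1^{k} - 1 < 0$, since $\beta_1^{n-k} = 1-c$ and $\beta_1 < 1$; because $g$ is strictly increasing on $(0,\infty)$ with root $\alpha_1$, this forces $\beta_1 < \alpha_1$. Likewise, with $f(x) = x^n - c x^k - 1$ one gets $f(\beta_2) = \beta_2^{k}(\beta_2^{n-k} - c) - 1 = \beta_2^{k} - 1 > 0$, since $\beta_2^{n-k} = 1+c$ and $\beta_2 > 1$; as $f$ is increasing at and beyond its positive root $\alpha_2$, this forces $\beta_2 > \alpha_2$. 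Thus the open ring of Lemma~\ref{f} contains the closed ring of Lemma~\ref{e}.

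I expect the only genuine subtlety to be the justification in the first step that bounds derived for the analytic trinomial $q$ transfer to the harmonic polynomial $p_c$: the argument rests entirely on the fact that conjugation leaves the modulus unchanged, so that $|\overline{z}^k| = |z|^k$ and the conjugate never enters the governing inequalities. Once that transfer is secured, identifying $\beta_1,\beta_2$ as the roots of the displayed linear-in-$x^{n-k}$ equations and the comparisons with $\alpha_1,\alpha_2$ are routine monotonicity computations.
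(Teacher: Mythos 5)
Your proposal is correct, and it is a genuine superset of the paper's proof rather than the same argument. The paper never applies Lemma~\ref{c} directly: its official proof of Lemma~\ref{f} is precisely the containment computation you relegate to your final paragraph. It defines $\mathcal{F}(x)=x^n+cx^k-1$ and $\mathcal{G}(x)=x^n-cx^k-1$, computes $\mathcal{F}(\beta_1)=\beta_1^k-1<0=\mathcal{F}(\alpha_1)$ and $\mathcal{G}(\alpha_2)=0<\beta_2^k-1=\mathcal{G}(\beta_2)$, concludes $\beta_1<\alpha_1$ and $\alpha_2<\beta_2$ by monotonicity, and then leans on Lemma~\ref{e} (all zeros of $p_c$ satisfy $\alpha_1\leq|z|\leq\alpha_2$) to place the zeros strictly inside the $\beta$-ring. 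Your primary route --- substituting $a=c$, $b=-1$ into Kennedy's Lemma~\ref{c} so that $|b|^{(n-k)/n}=1$ and the two equations collapse to $x^{n-k}+c-1=0$ and $x^{n-k}-c-1=0$, then transferring the ring to $p_c$ --- is the route the paper only gestures at in the unproved remark following Lemma~\ref{c}; your justification of the transfer is the right one, though to make it airtight you should note that Kennedy's proof of Lemma~\ref{c} rests only on the two modulus inequalities $|z|^n-c|z|^k\leq 1\leq |z|^n+c|z|^k$, and not on the factorization $z^k(z^{n-k}+a)=-b$ itself, which has no analogue for the harmonic trinomial since $\overline{z}^k$ cannot be factored against $z^n$. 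The trade-off: the paper's route needs Lemma~\ref{e} but avoids re-examining Kennedy's second lemma in the harmonic setting, while your route is independent of Lemma~\ref{e} and makes the analytic-to-harmonic transfer explicit. One shared loose end worth tightening in either version: $\mathcal{G}$ (your $f$) is not monotone on all of $(0,\infty)$, so the step from $\mathcal{G}(\beta_2)>0$ to $\beta_2>\alpha_2$ should be justified by the uniqueness of the positive root of $\mathcal{G}$ (Descartes, one sign change) together with $\mathcal{G}(0)=-1<0$, rather than by inverting $\mathcal{G}$ or by monotonicity beyond the root alone; both proofs also implicitly assume $0<c<1$, since otherwise $\beta_1$ does not exist.
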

  \begin{proof}
 It will suffices to show that $\beta_1 <\alpha_1$ and $\beta_2 > \alpha_2$, where $\alpha _1$ and $\alpha _2$ are positive real roots of  the equations $ x^n+ cx^k-1=0$ and $x^n-cx^k-1=0 $ respectively. Define $\mathcal{F}(x)= x^n+cx^k-1$ and $\mathcal{G}(x)=x^n-cx^k-1.$ Then 
$$ \mathcal{F}(\beta_1)= \beta_1^n +c\beta_1 ^k -1= \beta_1^k(\beta_1^{n-k}+c)-1 = \beta_1^k -1 < 1-1 = 0$$ since $\beta _1$ is a root of and $\beta_1 <1.$ This directly implies that $\mathcal{F}(\beta_1) < 0 = \mathcal{F}(\alpha_1).$ By applying the inverse function $(\mathcal{F})^{-1}$ to both sides of the last inequality, we find that  $\beta_1 < \alpha_1.$ Similarly, $ \mathcal{G}(\alpha_2) < \mathcal{G}(\beta_2)$ and thus $\alpha_2 < \beta_2.$ Therefore we conclude that the root of harmonic trinomial $p_c(z) = z^n + c\overline{z}^k - 1 $ lies in the ring $\beta _1 < |z| < \beta _2.$
  \end{proof}
 Now we are in position to prove our theorem, Theorem $\ref{a}.$
 \begin{proof}
 By looking at lemma $(\ref{f})$, the forward calculation gives us  $\beta_1 = (1-c)^{\frac{1}{n-k}}$ and $ \beta_2 = (1+c)^{\frac{1}{n-k}}.$ Moreover, we have $ (1-c)^{\frac{1}{n-k}} < |z| < (1+c)^{\frac{1}{n-k}}$ which ends the proof of part \textit{(a).} On one hand,  $p_c(z) = 0 \Rightarrow 1= |z^k||z^{n-k}+c| \geq |z|^k \left[ c-|z|^{n-k} \right].$ Since $|z| \geq 1,$ we have $1 \geq c- |z|^{n-k}.$ This show us that $\left(c-1\right)^{\frac{1}{n-k}} \leq |z|.$ On the other hand, $p_c(z) = 0 \Rightarrow 1= |z^k||z^{n-k}+c| \geq |z|^k \left[ |z|^{n-k}-c \right] \geq |z|^{n-k}-c.$ This gives us $\left(1+c\right)^{\frac{1}{n-k}} \geq |z|.$ Therefore, part \textit{(b)} follows. In similar fashion, part \textit{(c)} can also be proved.
 \end{proof}
 \begin{example}
Consider $p_c(z)=z^5+c\overline{z}^3-1.$ Here, if $n=5$ and $k=3,$ then $n-k=2.$ When $c=\frac{1}{2},$ corresponding to the case $0<c<1,$ $ \sqrt{\frac{1}{2}}$ and $ \sqrt{\frac{3}{2}}$ are the positive real roots of the equations $x^2 +\frac{1}{2}-1= x^2-\frac{1}{2}=0$ and $x^2-\frac{1}{2}-1= x^2-\frac{3}{2}=0$ respectively. Therefore all the zeros of  $p_c(z)= z^5+\frac{1}{2}\overline{z}^3-1$ are contained in $\sqrt{\frac{1}{2}} \leq |z| \leq \sqrt{\frac{3}{2}}.$ When $c=2,$ corresponding to the case $c \geq 1, 1$ and $\sqrt{3}$ are the positive real roots of the equations $x^2+2-1=x^2-1=0$ and $x^2-2-1=x^2-3=0$ respectively. Therefore all the zeros of  $p_c(z)= z^5+2\overline{z}^3-1$ are contained in $1 \leq |z| \leq \sqrt{3}.$ 
\end{example}
 \section{Conclusion}$\label{c'}$
 In this paper we have found an upper bound  for the moduli of all the zeros of complex-valued harmonic polynomials.
We derived here a closed disk in which all zeros are included and we bound the moduli of the zeros of the complex-valued harmonic  polynomials in general. More specifically,  it has been shown that the number of zeros of $p_c(z) = z^n + c\overline{z}^k - 1$ changes as $c$ varies by Brilleslyper \textit{et al} \cite{brilleslyper2020zeros}. In this paper  we have derived a locations for the zeros of $p_c(z)$ and the result  shows that the location of the zeros also changes as $c$ varies.
\section*{Acknowledgments}
The authors gratefully acknowledge Addis Ababa University for providing access to the completion of this work. The authors are supported in part by a grant from Simon's Foundation.
\section*{Declaration of Interest of Statement}
The authors declare that there are no conflicts of interest regarding the publication of this paper.

\end{document}